\newcommand \al{\alpha}
\newcommand \bs{\backslash}
\newcommand \C{{\mathbb C}}
\newcommand \CF{\mathcal{F}}
\newcommand \CO{\mathcal{O}}
\newcommand \eps{\varepsilon}
\newcommand \Ga{\Gamma}
\newcommand \ga{\gamma}
\newcommand \HH{{\mathbb H}}
\newcommand \InjRad{\operatorname{InjRad}}
\newcommand \la{\lambda}
\newcommand \N{{\mathbb N}}
\newcommand \Pl{\mathrm{Pl}}
\newcommand \PSL{\operatorname{PSL}}
\newcommand \R{{\mathbb R}}
\newcommand \SL{\operatorname{SL}}
\newcommand \SO{\operatorname{SO}}
\newcommand \sm{\smallsetminus}
\newcommand \supp{\operatorname{supp}}
\newcommand \tr{\operatorname{tr}}
\newcommand \triv{\mathrm{triv}}
\newcommand \vol{\operatorname{vol}}
\newcommand \what{\widehat}
\renewcommand \1{{\bf 1}}
\renewcommand \Im{\operatorname{Im}}
\renewcommand \Re{\operatorname{Re}}
\renewcommand \({\left(}
\renewcommand \){\right)}
\newcommand{\e}
[1]{\emph{#1}\index{#1}}
\newtheorem{theorem}{Theorem}[section]
\newtheorem{lemma}[theorem]{Lemma}
\theoremstyle{definition}
\newtheorem{definition}[theorem]{Definition}
\newtheorem{remark}[theorem]{Remark}
\begin{document}

\pagestyle{myheadings} \markright{Benjamini-Schramm and zeta}

\title{Benjamini-Schramm convergence and zeta functions}
\author{Anton Deitmar}
\date{}
\maketitle

{\bf Abstract:}
In this  note we show the equivalence of Benjamini-Schramm convergence and convergence of zeta functions for compact hyperbolic surfaces.

$$ $$

\tableofcontents

\newpage
\section*{Introduction}

Benjamini-Schramm convergence of metric probability spaces $X_n$ to a pointed metric space $(X,p)$ means that for every radius $R>0$ the likelihood of a point $x$ having the ball $B_R(x)$ of radius $R$ isometric with $B_R(p)$ tends to one, i.e.,
$$
P_n\(\big\{x\in X_n: B_R(x)\cong B_R(p)\big\}\)\longrightarrow 1,\qquad n\to\infty.
$$
For hyperbolic surfaces, Atle Selberg  introduced in \cite{Selberg} a geometric zeta function  which counts closed geodesics.
Ihara \cite{Ihara2by2} established a $p$-adic analog of this, which later was generalised to arbitrary graphs by Hashimoto and Hori \cite{Hashi}.
Lenz, Pogorzelski and Schmidt proved recently \cite{Lenz}, that a sequence of graphs of bounded valency is Benjamini-Schramm convergent to an infinite tree if and only if its Ihara zeta functions converge to the trivial one.

In the present paper the same assertion is established for compact hyperbolic surfaces.
The bounded valency condition is here replaced by a lower bound on the injectivity radius.
The proof of the main theorem uses the trace formula to transfer the statement to a spectral theoretic context where then a growth estimate on Laplace eigenvalues \cite{HKP} plays a crucial role.
In the first section we introduce the most important notions and collect material from our previous paper \cite{BSSpectral}. In the second section we state and prove the main theorem and in the third section we collect some further questions and projects which might come out of this paper.

\section{Plancherel and Benjamini-Schramm sequences}\label{S1}
Let $G=\PSL_2(\R)=\SL_2(\R)/\pm 1$. Then $K=\SO(2)/\pm 1$ is a maximal compact subgroup of $G$.
The group $G$ acts on the upper half plane $\HH=\{ z\in\C:\Im(z)>0\}$ via linear fractionals and this action induces an identification of  $G$ with the group of orientation-preserving isometries of the two-dimensional hyperbolic space.
We normalize the Haar measure on $K$ to have volume 1. Next we normalize the Haar measure on $G$ such that it induces the usual $\frac{dxdy}{y^2}$ on the upper half plane $\HH\cong G/K$.

For a cocompact lattice $\Ga\subset G$ the unitary representation of $G$, given by right translation on $L^2(\Ga\bs G)$ decomposes as a direct sum of irreducibles
$$
L^2(\Ga\bs G)\cong\bigoplus_{\pi\in\what G}N_\Ga(\pi)\pi.
$$
The multiplicities $N_\Ga(\pi)$ are finite and are zero outside a countable subset of the unitary dual $\what G$.

\begin{definition}
We  say that the measure on $\what G$ given by
$$
\mu_\Ga=\sum_{\pi\in\what G}N_\Ga(\pi)\,\delta_\pi
$$
is the \e{spectral measure} attached to $\Ga$.
\end{definition}

\begin{definition}
Let $(\Ga_n)$ be a sequence of cocompact lattices in $G$.
We say that the sequence is a \e{Plancherel sequence}, if for every $f\in C_c^\infty(G)$ we have
$$
\frac1{\vol(\Ga_n\bs G)}\int_{\what G}\hat f(\pi)\,d\mu_{\Ga_n}(\pi)\ \longrightarrow f(e)
$$
as $n\to\infty$, where $\hat f(\pi)=\tr\pi(f)$.

By the Plancherel Theorem we have  $f(e)=\int_{\what G}\hat f(\pi)\,d\mu_\Pl(\pi)$, where $\mu_\Pl$ is the Plancherel measure on $\what G$, so that the sequence $(\Ga_n)$ is Plancherel if and only if in the dual space of $C_c^\infty(G)$ one has weak-*-convergence
$$
\frac1{\vol(\Ga_n\bs G)}\mu_{\Ga_n}\ \longrightarrow\ \mu_\Pl.
$$
\end{definition}

\begin{remark}
\begin{enumerate}[\rm (a)]
\item If a sequence $(\Ga_n)$ of  lattices is a Plancherel sequence, then
$$
\frac1{\vol(G/\Ga_n)}\mu_{\Ga_n}(U)\ \longrightarrow\ \mu_\Pl(U)
$$
for every relatively compact open set $U\subset\what G$, whose boundary has Plancherel measure zero.
This follows from the density principle of Sauvageot \cite{Sauvageot}.
\item
If $(\Ga_n)$ is a Plancherel sequence, then 
$$
\vol(\Ga_n\bs G)\longrightarrow 0
$$
as $n\to\infty$.
This follows from the fact that the spectral measure of each $\Ga_n$ is discrete and the Plancherel measure is not.
\end{enumerate}
\end{remark}

\begin{definition}
A sequence $(\Ga_n)$ of lattices is called \e{uniformly discrete}, if there exists a unit-neighborhood $U\subset G$ such that $x^{-1}\Ga_nx\cap U=\{1\}$ holds for every $x\in G$.
\end{definition}

\begin{definition}
In \cite{7Samurais} the sequence of spaces $\Ga_n\bs \HH$ is said to be \e{Benjamini-Schramm convergent} or \e{BS-convergent} to $\HH$ if for every $R>0$
$$
P_{\Ga_n\bs \HH}\(\big\{x\in\Ga_n\bs X: \InjRad(x)\le R\big\}\)
$$
tends to zero, where $\InjRad(x)$ is the injectivity radius at the point $x$.

In this case we say that the sequence $(\Ga_n)$ is a \e{BS-sequence}.
\end{definition}

\begin{remark}
In \cite{BSSpectral} it is shown that 
\begin{align*}
&(\Ga_n)\text{ is BS and uniformly discrete}\\
&\Rightarrow (\Ga_n)\text{ Plancherel}\\
&\Rightarrow (\Ga_n)\text{ is BS}.
\end{align*}
\end{remark}

\begin{lemma}
Let $(\Ga_n)$ be a Plancherel sequence and let $(\Sigma_n)$ be a sequence of sublattices $\Sigma_n\subset\Ga_n$.
Then $(\Sigma_n)$  again is a Plancherel sequence.
\end{lemma}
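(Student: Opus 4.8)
The plan is to reformulate the Plancherel property in terms of the geometric side of the trace formula and then dominate the non-identity contributions of $\Sigma_n$ by those of $\Ga_n$. First note that each $\Sigma_n$ has finite index $m_n:=[\Ga_n:\Sigma_n]$ in $\Ga_n$: the covering $\Sigma_n\bs G\to\Ga_n\bs G$ has fibres of cardinality $m_n$, so $\vol(\Sigma_n\bs G)=m_n\,\vol(\Ga_n\bs G)$, which forces $m_n<\infty$ since both covolumes are finite and positive. Next, for a cocompact lattice $\Ga$ and $f\in C_c^\infty(G)$, expanding the trace of the convolution operator $R_\Ga(f)$ on $L^2(\Ga\bs G)$ against its integral kernel $(x,y)\mapsto\sum_{\ga\in\Ga}f(x^{-1}\ga y)$ (a locally finite sum) and using $\mu_\Ga=\sum_\pi N_\Ga(\pi)\delta_\pi$, $\hat f(\pi)=\tr\pi(f)$ gives the elementary identity
$$
\int_{\what G}\hat f(\pi)\,d\mu_\Ga(\pi)=\tr R_\Ga(f)=\int_{\Ga\bs G}\sum_{\ga\in\Ga}f(x^{-1}\ga x)\,dx=\vol(\Ga\bs G)\,f(e)+\int_{\Ga\bs G}\sum_{\ga\in\Ga\sm\{e\}}f(x^{-1}\ga x)\,dx .
$$
Setting $E_\Ga(f):=\frac1{\vol(\Ga\bs G)}\int_{\Ga\bs G}\sum_{\ga\in\Ga\sm\{e\}}f(x^{-1}\ga x)\,dx$, the sequence $(\Ga_n)$ is thus Plancherel precisely when $E_{\Ga_n}(f)\to0$ for every $f\in C_c^\infty(G)$, and it suffices to prove the analogous statement $E_{\Sigma_n}(f)\to0$ for $(\Sigma_n)$.

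The second step is an unfolding over the cover $\Sigma_n\bs G\to\Ga_n\bs G$. Fix coset representatives $\Ga_n=\bigsqcup_{i=1}^{m_n}\Sigma_n\delta_i$ and a Borel fundamental domain $\CF$ for the left action of $\Ga_n$ on $G$; then $\bigsqcup_{i=1}^{m_n}\delta_i\CF$ is a fundamental domain for $\Sigma_n$. Substituting $x=\delta_i y$ and using left invariance of Haar measure yields
$$
\int_{\Sigma_n\bs G}\sum_{\sigma\in\Sigma_n\sm\{e\}}f(x^{-1}\sigma x)\,dx=\sum_{i=1}^{m_n}\int_\CF\sum_{\ga\in\delta_i^{-1}\Sigma_n\delta_i\sm\{e\}}f(y^{-1}\ga y)\,dy .
$$
Now $\delta_i^{-1}\Sigma_n\delta_i\sm\{e\}\subseteq\Ga_n\sm\{e\}$ for each $i$, so choosing $g\in C_c^\infty(G)$ with $g\ge|f|$ pointwise (hence $g\ge0$), the inner sum is bounded in absolute value by $\sum_{\ga\in\Ga_n\sm\{e\}}g(y^{-1}\ga y)$; summing over $i$ and dividing by $\vol(\Sigma_n\bs G)=m_n\,\vol(\Ga_n\bs G)$ gives
$$
|E_{\Sigma_n}(f)|\ \le\ \frac1{\vol(\Ga_n\bs G)}\int_{\Ga_n\bs G}\sum_{\ga\in\Ga_n\sm\{e\}}g(y^{-1}\ga y)\,dy\ =\ E_{\Ga_n}(g) .
$$
Since $g\in C_c^\infty(G)$, the Plancherel hypothesis on $(\Ga_n)$ gives $E_{\Ga_n}(g)\to0$, hence $E_{\Sigma_n}(f)\to0$, and by the first step $(\Sigma_n)$ is Plancherel.

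The points needing (routine) care are: the trace-class property of $R_\Ga(f)$ and the identity $\tr R_\Ga(f)=\int_{\Ga\bs G}\sum_\ga f(x^{-1}\ga x)\,dx$, both standard for cocompact $\Ga$ and $f\in C_c^\infty(G)$; the left/right coset bookkeeping in the unfolding; and the replacement of a general $f$ by a nonnegative dominating $g\in C_c^\infty(G)$, which is needed because one cannot assume $f\ge0$ while retaining smoothness. I expect the fundamental-domain unfolding to be the only step with any genuine risk of a side error; once the normalization $\vol(\Sigma_n\bs G)=m_n\vol(\Ga_n\bs G)$ is in place, the domination $|E_{\Sigma_n}(f)|\le E_{\Ga_n}(g)$ and the conclusion are immediate.
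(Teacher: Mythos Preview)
Your proof is correct and follows essentially the same approach as the paper. The paper invokes a characterization from \cite{BSSpectral} stating that $(\Ga_n)$ is Plancherel if and only if $\frac1{\vol(\Ga_n\bs G)}\int_{\Ga_n\bs G}\#(x^{-1}\Ga_n^*x\cap C)\,dx\to0$ for every compact $C$, whereas you derive the equivalent trace-formula criterion $E_{\Ga_n}(f)\to0$ directly; both versions then proceed by the same coset unfolding and domination, with your extra step of replacing $f$ by a nonnegative majorant $g$ compensating for working with signed test functions rather than indicator functions.
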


\begin{proof}
In \cite{BSSpectral}, Proposition 2.9 it is shown that  a sequence of cocompact lattices $(\Ga_n)$ is Plancherel if and only if for every compact set $C\subset G$ the sequence
$$
\frac1{\vol(\Ga_n\bs G)}\int_{\Ga_n\bs G}\#\big(x^{-1}\Ga_n^*x\cap C\big)\,dx
$$
tends to zero as $n\to\infty$, where $\Ga_n^*=\Ga_n\sm\{1\}$.
Now let $(\Sigma_n)$ be a sequence as in the lemma.
Then, being a lattice, each $\Sigma_n$ has finite index in $\Ga_n$.
Fix a fundamental domain $\CF_n$ of $\Ga_n\bs G$.
Then
\begin{align*}
&\frac1{\vol(\Sigma_n\bs G)}\int_{\Sigma_n\bs G}
\#\big(x^{-1}\Sigma_n^*x\cap C\big)\,dx\\
&=\frac1{\vol(\Sigma_n\bs G)}\sum_{\ga\in\Sigma_n\bs\Ga_n}\int_{\CF_n}
\#\big(x^{-1}\ga^{-1}\Sigma_n^*\ga x\cap C\big)\,dx\\
&\le\frac{[\Ga_n:\Sigma_n]}{\vol(\Sigma_n\bs G)}\int_{\CF_n}
\#\big(x^{-1}\Ga_n^* x\cap C\big)\,dx\\
&=\frac{1}{\vol(\Ga_n\bs G)}\int_{\Ga_n\bs G}
\#\big(x^{-1}\Ga_n^* x\cap C\big)\,dx.
\end{align*}
As the latter tends to zero, so does the former.
\end{proof}

\section{The Selberg zeta function}

Let $(\Ga_n)$ be a Plancherel-sequence in $G$.
For simplicity, we shall assume that each $\Ga_n$ is torsion-free, which can easily be arranged as every lattice $\Ga$ contains a torsion-free  sublattice.

\begin{definition}
The Selberg zeta function for $\Ga_n$ is defined for $s\in\C$ with $\Re(s)>1$ as
$$
Z_n(s)= \prod_{\ga}\prod_{k\ge 0} \( 1-e^{-(s+k)l(\ga)}\),
$$
where the first product runs over all primitive hyperbolic conjugacy 
classes in $\Ga_n$ (see \cite{HA2}, Section 11.6). The product converges for $\Re(s)>1$ and the so defined function extends holomorphically to all of $\C$.
\end{definition}

\begin{theorem}
Let $(\Ga_n)$ be a sequence of torsion-free cocompact lattices in $G$.
\begin{enumerate}[\rm (a)]
\item If the sequence $(\Ga_n)$ is uniformly dscrete and  Plancherel, then  
$$
\frac1{\vol(\Ga_n\bs G)}\frac{Z_n'}{Z_n}(s)
$$ 
converges to zero in the set $\{\Re(s)>1\}$.
\item If 
$$
\frac1{\vol(\Ga_n\bs G)}\frac{Z_n'}{Z_n}(s)
$$ 
converges to zero in the set $\{\Re(s)>1\}$, then the sequence is Plancherel.
\end{enumerate}
In either case, the convergence of $\frac1{\vol(\Ga_n\bs G)}\frac{Z_n'}{Z_n}(s)$ is uniform on every set of the form $\{\Re(s)\ge\al\}$, for $\al>1$. 
\end{theorem}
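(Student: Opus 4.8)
The proof rests on the classical expansion of $Z_n'/Z_n$ as a sum over closed geodesics. Differentiating the defining Euler product and expanding the resulting geometric series one finds, for $\Re(s)>1$,
$$
\frac{Z_n'}{Z_n}(s)=\sum_{\ga}\frac{l(\ga_0)}{2\sinh(l(\ga)/2)}\,e^{-(s-\frac12)l(\ga)},
$$
where $\ga$ runs over all (not necessarily primitive) hyperbolic conjugacy classes in $\Ga_n$, $\ga_0$ is the underlying primitive class, and $l(\cdot)$ is the translation length; the series converges absolutely. Two facts will be used repeatedly. First, all coefficients are positive, so for $\Re(s)\ge\al>1$ one has $\big|\frac{Z_n'}{Z_n}(s)\big|\le\frac{Z_n'}{Z_n}(\Re(s))\le\frac{Z_n'}{Z_n}(\al)$; hence $\sup_{\Re(s)\ge\al}\big|\frac1{\vol(\Ga_n\bs G)}\frac{Z_n'}{Z_n}(s)\big|=\frac1{\vol(\Ga_n\bs G)}\frac{Z_n'}{Z_n}(\al)$, so it suffices throughout to treat real $s=\al$, and the uniform convergence on $\{\Re(s)\ge\al\}$ claimed at the end follows automatically from convergence at $\al$. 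Second, a standard unfolding together with the computation of hyperbolic orbital integrals on $G$ shows that for compact $C\subset G$
$$
\int_{\Ga_n\bs G}\#\big(x^{-1}\Ga_n^* x\cap C\big)\,dx=\sum_{\ga}l(\ga_0)\,\CO_\ga(\1_C)
$$
(sum over $\ga\ne 1$, hence hyperbolic), where the orbital integral $\CO_\ga(\1_C)$ vanishes unless $l(\ga)\le R_C$ for a constant $R_C$ depending on $C$, satisfies $\CO_\ga(\1_C)\le D_C/\sinh(l(\ga)/2)$, and — choosing $C=C_{R,\eps_0}$ suitably for given $R>0,\ \eps_0>0$ — is bounded below by a positive constant $c(R,\eps_0)$ on the range $\eps_0\le l(\ga)\le R$.

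Part (b) I would prove directly. Fix a compact $C$ and $\al>1$. In the last display only terms with $l(\ga)\le R_C$ occur, so inserting $1\le e^{(\al-\frac12)R_C}e^{-(\al-\frac12)l(\ga)}$ and applying $\CO_\ga(\1_C)\le D_C/\sinh(l(\ga)/2)$ gives
$$
\frac1{\vol(\Ga_n\bs G)}\int_{\Ga_n\bs G}\#\big(x^{-1}\Ga_n^* x\cap C\big)\,dx\ \le\ 2D_C\,e^{(\al-\frac12)R_C}\cdot\frac1{\vol(\Ga_n\bs G)}\frac{Z_n'}{Z_n}(\al),
$$
which tends to $0$ by hypothesis. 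As $C$ was arbitrary, the criterion of \cite{BSSpectral}, Proposition~2.9, recalled above, shows that $(\Ga_n)$ is Plancherel.

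For part (a) I would split the geodesic sum at a threshold $R$. Uniform discreteness yields $\eps_0>0$ with $l(\ga)\ge\eps_0$ for every $\ga\ne 1$ in every $\Ga_n$. For the short part, choosing $C=C_{R,\eps_0}$ as above and using $\CO_\ga(\1_{C_{R,\eps_0}})\ge c(R,\eps_0)$ for $\eps_0\le l(\ga)\le R$ one gets
$$
\frac1{\vol(\Ga_n\bs G)}\sum_{l(\ga)\le R}l(\ga_0)\ \le\ \frac1{c(R,\eps_0)}\cdot\frac1{\vol(\Ga_n\bs G)}\int_{\Ga_n\bs G}\#\big(x^{-1}\Ga_n^* x\cap C_{R,\eps_0}\big)\,dx\ \longrightarrow\ 0
$$
by the Plancherel hypothesis and Proposition~2.9; since $\frac1{2\sinh(l(\ga)/2)}\le\frac1{2\sinh(\eps_0/2)}$ and $e^{-(\al-\frac12)l(\ga)}\le1$, the part of $\frac1{\vol(\Ga_n\bs G)}\frac{Z_n'}{Z_n}(\al)$ with $l(\ga)\le R$ is at most a constant times this, hence tends to $0$ for every fixed $R$. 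For the long part I would bring in the trace formula: applying the Selberg trace formula for $\Ga_n$ to a fixed nonnegative even $g_T\in C_c^\infty(\R)$ with $g_T\ge\1_{[-T,T]}$ and $\supp g_T\subset[-T-1,T+1]$, and estimating the spectral side with the help of the uniform growth bound for the Laplace eigenvalue counting function of \cite{HKP}, one obtains
$$
\frac1{\vol(\Ga_n\bs G)}\sum_{l(\ga)\le T}\frac{l(\ga_0)}{2\sinh(l(\ga)/2)}\ \le\ C\,(1+T)
$$
with $C$ independent of $n$ and $T$. Grouping the long part over the ranges $l(\ga)\in(R+k,R+k+1]$, $k\ge 0$, and using $\al>1$, this yields
$$
\frac1{\vol(\Ga_n\bs G)}\sum_{l(\ga)>R}\frac{l(\ga_0)}{2\sinh(l(\ga)/2)}\,e^{-(\al-\frac12)l(\ga)}\ \le\ C\sum_{k\ge0}(2+R+k)\,e^{-(\al-\frac12)(R+k)},
$$
which is independent of $n$ and tends to $0$ as $R\to\infty$. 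Combining the two parts with the domination noted at the outset proves that $\frac1{\vol(\Ga_n\bs G)}\frac{Z_n'}{Z_n}(s)\to 0$ uniformly on $\{\Re(s)\ge\al\}$.

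The main obstacle is the long part of (a). The ``resolvent'' test function $u\mapsto e^{-(s-\frac12)|u|}$ naturally attached to $Z_n'/Z_n(s)$ is neither compactly supported nor smooth and its spectral sum diverges, so the trace formula cannot be applied to $Z_n'/Z_n(s)$ itself; one must truncate and then control the geodesic tail uniformly in $n$, and the only non-formal ingredient for that is exactly the $n$-uniform eigenvalue growth estimate of \cite{HKP}. The remaining points — the differentiation of the Euler product, the orbital integral identity and bounds, and the elementary summations — are routine.
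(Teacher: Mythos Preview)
Your treatment of the uniformity addendum and of part~(b) is essentially the paper's: positivity of the Dirichlet coefficients reduces to real $s=\al$, and in (b) the paper bounds the geometric side of the trace formula by $\frac{Z_n'}{Z_n}(\al)$, which is exactly your estimate rephrased through the counting criterion of \cite{BSSpectral}, Prop.~2.9.

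For part~(a) your route is genuinely different. The paper passes to the spectral side: it writes $D_s^2\big(\tfrac{Z_n'}{Z_n}\big)(s+\tfrac12)$ as $8s\sum_j(s^2+r_j^2)^{-3}$ minus an explicit Plancherel term, proves $\frac1{\vol}\sum_j(s^2+r_j^2)^{-3}\to\mu_\Pl(h_s)$ via Sauvageot together with the uniform eigenvalue bound of \cite{HKP}, and then descends from $D_sL_n\to0$ to $L_n\to0$ by a positivity lemma. You instead stay on the geometric side and split the geodesic sum at a threshold $R$: the short part is killed by Prop.~2.9, and the long part is to be controlled uniformly in $n$ by a crude geodesic--counting bound extracted from the trace formula. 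This is more elementary in that it avoids the explicit spectral identity for $Z_n'/Z_n$ and the $D_s$--lemma; the paper's approach, on the other hand, makes the link to Plancherel convergence of spectral measures completely transparent.

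There is, however, a concrete error in your long--part estimate. The claimed bound
\[
\frac1{\vol(\Ga_n\bs G)}\sum_{l(\ga)\le T}\frac{l(\ga_0)}{2\sinh(l(\ga)/2)}\ \le\ C\,(1+T)
\]
is false already for a single fixed $\Ga_n$: by the prime geodesic theorem the left side grows like $e^{T/2}$. The reason your trace--formula argument misses this is the contribution of the small eigenvalues. For $r_j\in i(0,\tfrac12]$ one has $h(r_j)=\widehat{g_T}(r_j)=\int g_T(u)\cosh(t_ju)\,du$, which is of size $e^{T/2}$, not $O(1)$; since the number of such eigenvalues is $\le N_n(1)\le C\,\vol(\Ga_n\bs G)$ by \cite{HKP}, the correct uniform bound is $\frac1{\vol}\sum_{l(\ga)\le T}\frac{l(\ga_0)}{2\sinh(l(\ga)/2)}\le C\,(1+T)e^{T/2}$. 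Fortunately your scheme survives this correction: inserting the amended bound into the tail sum gives, for $\al>1$,
\[
\frac1{\vol}\sum_{l(\ga)>R}\frac{l(\ga_0)}{2\sinh(l(\ga)/2)}\,e^{-(\al-\frac12)l(\ga)}
\ \le\ C\sum_{k\ge0}\big(1+(R+k+1)e^{(R+k+1)/2}\big)e^{-(\al-\frac12)(R+k)}
\ \le\ C_\al\,e^{-(\al-1)R},
\]
which still tends to $0$ as $R\to\infty$, uniformly in $n$. With this fix your argument for (a) goes through.
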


\begin{proof}
For $\Re(s)>1$ we have
\begin{align*}
\frac{Z_n'}{Z_n}(s)=\sum_{k\ge 0}\sum_{[\ga]}\ell(\ga_0)e^{-(s+k)\ell(\ga)}=\sum_{[\ga]}\ell(\ga_0)\frac{e^{-s\ell(\ga)}}{e^{\ell(\ga)}-1},
\end{align*}
where the sum runs over all conjugacy classes $[\ga]\ne\{1\}$ in $\Ga_n$ and $\ga_0$ is the underlying primitive of $\ga$, i.e., $\ga=\ga_0^m$ for some $m\in\N$.
Now let $\Re(s)\ge\al>1$, then $\left|e^{-(s+k)\ell(\ga)}\right|=e^{-(\Re(s)+k)\ell(\ga)}\le e^{-(\al+k)\ell(\ga)}$ and so the addendum follows.

(a)
Now suppose that the sequence $(\Ga_n)$ is Plancherel.
By \cite{HA2}, Section 11, we have
\begin{align*}
L^2(\Ga_n\bs \HH)=L^2(\Ga_n\bs G)^K=\C\oplus\bigoplus_{j=1}^\infty \pi_{ir_j},
\end{align*}
where $\C$ stands for the one-dimensional space of constant functions and $\pi_{ir}$ is the induced representation (Principal or Complementary series) with $r\in \R\cup i\left(-0,\frac12\right)$.
Formally we set $r_0=\frac i2$.
By Section 11.6 of \cite{HA2} we then have 
\begin{align*}
\frac1s\frac{Z_n'}{Z_n}\(s+\frac12\)&=\frac{1}{b}\frac{Z_n'}{Z_n} \(b+\frac12\)-\frac{\vol(\Ga_n\bs G)}\pi
\sum_{n=0}^\infty\left( \frac{1}{s+\frac12+n} -\frac{1}{b+\frac12+n}\right) \\
& + 2\sum_{j=0}^\infty\frac{1}{s^2+r_j^2}-
\frac{1}{b^2+r_j^2}.
\end{align*}
Let $D_s$ denote the operator $D_s(\psi)(s)=-\frac{\partial}{\partial s}\(\frac1s\psi(s)\)$
We get
\begin{align*}
D_s\frac{Z_n'}{Z_n}\(s+\frac12\)
&=
4s\sum_{j=0}^\infty\frac1{(s^2+r_j^2)^2}
-\frac{\vol(\Ga_n\bs G)}\pi\sum_{n=0}^\infty \frac1{(s+\frac12+n)^2}
\end{align*}
and
\begin{align*}
D_s^2\frac{Z_n'}{Z_n}\(s+\frac12\)
&=
-\frac{\vol(\Ga_n\bs G)}\pi\(\frac1{s^2}\sum_{n=0}^\infty \frac1{(s+\frac12+n)^2}+2\sum_{n=0}^\infty\frac1{(s+\frac12+n)^3}  
\)\\
&\ \ \ +8s\sum_{j=0}^\infty\frac1{(s^2+r_j^2)^3}
\end{align*}
For $T\ge 1$ let $N_n(T)=\#\big\{ j:\big|\frac14+r_j^2<T\big\}$.
Recall that $\la_j=\frac14+r_j^2$ is the $j$-th Laplace eigenvalue.
To make present the dependence on $n$ we write $\la_j^{(n)}$.
The sequence $(\Ga_n)$ being uniformly discrete means that the injectivity radii of the manifolds $\Ga_n\bs\HH$ are bounded below. 
Therefore, by formula (1.2.5) of \cite{HKP}, there exists a constant $C>0$ such that for every $T\ge 1$ one has
$$
N(T)\le C\,\vol(\Ga_n\bs G)\, T.
$$
Let
$$
h_s(\la)=\frac1{\(s^2+\la-\frac14\)^3}.
$$
For $s>\frac12$ this function is positive on $[0,\infty)$.
It is continuous and monotonically decreasing.

Let $\what G_K$ denote the set of all 
$\pi\in\what G$ such that the representation space $V_\pi$ contains non-zero $K$-fixed vectors.
Then it is known \cite{Knapp},
$$
\what G_K=\left\{\pi_{ir}: r\in i\left(0,\frac12\right)\cup\R_{\ge 0}\right\}\cup\{\triv\}.
$$
The map $\phi:\what G_K\to [0,\infty)$,
given by
\begin{align*}
\phi(\triv)&=0,\\
\phi(\pi_{ir})&=\frac14+r^2
\end{align*}
is a homeomorphism.
We shall from now on identify $\what G_K$ with $[0,\infty)$.
If $I\subset [0,\infty)$ is a relatively open, bounded interval, then by \cite{Sauvageot}
we have
$$
\lim_{n\to\infty}\frac1{\vol(\Ga_n\bs G)}\mu_n(\1_I)
=\mu_{\Pl}(\1_I),
$$
where $\1_I$ is the indicator function of $I$.
By linearity this extends to linear combinations of functions of the form $\1_I$.
There exists a sequence $(L_k)_{k\in\N}$ of such linear combinations such that $0\le L_k\nearrow h_s$ outside a countable set $S$, which is of Plancherel measure zero and can also be chosen to be of $\mu_{\Ga_n}$ measure zero for all $n$ and have empty intersection with $\N$.
We can also choose the $L_k$ so that for each $T\in\N_0$ we have
$$
\Phi_k(T)=\sup_{x\in[T,T+1)\sm S}h_s(x)-L_k(x)
$$
tends to zero for $k\to\infty$.
For brevity, we write $\mu(f)$ instead of $\int_Xf\,d\mu$ where $\mu$ is a measure on $X$ and $f$ a function.
We also write $\mu_n=\mu_{\Ga_n}$.
Now, since $N(0)=0$,
\begin{align*}
0\le\mu_n(h_s-L_k)&=\sum_jh_s(\la_j)-L_k(\la_j)\\
&\le\sum_{T=0}^\infty \Phi_k(T)\big(N(T+1)-N(T)\big)\\
&\le C\vol(\Ga_n\bs G)\sum_{T=0}^\infty\Phi_k(T)(2T+1)
\end{align*}
Since $\Phi_k(T)\le 2h_s(T)$, we can apply
dominated convergence, to get that this sum tends to zero for $k\to\infty$.
So let $\eps>0$. Then there is $k_0\in\N$ such that for all $k\ge k_0$ we have
$0\le\mu_n(h_s)-\mu_n(L_k)<\vol(\Ga_n\bs G)\eps/3$ holds for all $n\in\N$ and that $|\mu_\Pl(L_k)-\mu_\Pl(h_s)|<\eps/3$.

Fix some $k\ge k_0$. Then there exists $n_0\in\N$ such that for all $n\ge n_0$ one has 
$$
\left|\frac1{\vol(\Ga_n\bs G)}\mu_n(L_k)-\mu_\Pl(L_k)\right|<\eps/3.
$$
And so
\begin{align*}
\left|\frac1{\vol(\Ga_n\bs G)}\mu_n(h_s)-\mu_\Pl(h_s)\right|
&\le \frac1{\vol(\Ga_n\bs G)}|\mu_n(h_s)-\mu_n(L_k)|\\
&+\left|\frac1{\vol(\Ga_n\bs G)}\mu_n(L_k)-\mu_\Pl(L_k)\right|\\
&+|\mu_\Pl(L_k)-\mu_\Pl(h_s)|<\frac\eps 3+\frac\eps 3+\frac\eps 3=\eps.
\end{align*}
This means that
$D_s^2\frac{Z_n'}{Z_n}(s)$ converges to zero for $\Re(s)>1$.
The following  lemma proves `only if' direction of  the theorem.

\begin{lemma}
For $n,k\in\N$ let $a_{n,k},b_{n,k}>0$ be real numbers. Suppose that  $L_n(s)=\sum_{k=1}^\infty a_{n,k}e^{-sb_{n,k}}$ converges for $\Re(s)>1$ and that $D_sL_n(s)$ tends to zero as $n\to\infty$.
Then $L_n(s)$ also tends to zero as $n\to\infty$ for every $s$ with $\Re(s)>1$.
\end{lemma}

\begin{proof}
As the sum $L_n(s)$ converges locally uniformly, by the Theorem of Weierstraß, we can differentiate under the sum to get for $s>1$ that
$$
s^2D_sL_n(s)=\sum_{k=1}^\infty a_{n,k}(sb_{n,k}+1)e^{-sb_{n,k}}\ge \sum_{k=1}^\infty a_{n,k}e^{-sb_{n,k}}=L_n(s)\ge 0.
$$ 
Now if the former tends to zero, then so will the latter.
\end{proof}

(b)
For the converse direction assume convergence of $\frac1{\vol(\Ga_n\bs G)}\frac{Z_n'}{Z_n}(s)$ to zero and  let $f\in C^\infty_c(G)$.
As $f$ has compact support, there exists $c>0$ such that for every $n\in\N$ and every $\ga\in\Ga_n\sm\{1\}$ with $l(\ga)>c$ the conjugation orbit
$\{x\ga x^{-1}:x\in G\}$ has empty intersection with $\supp(f)$.
Hence for such $\ga$ we have $\CO_\ga(f)=0$.
Here $\CO_\ga(f)=\int_{G/G_\ga}f(x\ga x^{-1})\,dx$
is the orbital integral and $G_\ga$ is the centralizer of $\ga$ in $G$.
As $f$ is bounded and has compact support, there exists $M>0$ such that $\ell(\ga_0)|\CO_g(f)|\le M$ for all hyperbolic $g\in G$.
Note that  any cocompact lattice only contains hyperbolic elements besides the trivial one.
By the trace formula \cite{HA2}, it follows that for given $s>1$ we have
\begin{align*}
\left|\frac1{\vol(\Ga_n\bs G)}\mu_n(f)-f(e)\right|
&\le \frac M{\vol(\Ga_n\bs G)}\sum_{[\ga]\ne [e], \ell(\ga)\le c}\ell(\ga_0)\\
&\le \frac M{\vol(\Ga_n\bs G)}\frac{e^c-1}{e^{-sc}}\sum_{[\ga]\ne [e], \ell(\ga)\le c}\ell(\ga_0)\frac{e^{-s\ell(\ga)}}{e^{\ell(\ga)}-1}\\
&= \frac M{\vol(\Ga_n\bs G)}\frac{e^c-1}{e^{-sc}}\frac{Z_n'}{Z_n}(s)\longrightarrow 0
\end{align*}
as $n\to\infty$. The theorem is proven.
\end{proof}

\section{Open questions and further projects}
\subsection*{Uniform discreteness}
One assertion of the main theorem was proven under the condition of uniform discreteness, or, equivalently, a lower bound on the injectivity radius.
It is not clear whether that condition is necessary.
It seems impossible to eliminate the injectivity radius from the eigenvalue estimates, as Theorem 8.1.2 in \cite{Buser} shows.
According to this theorem, for fixed genus $g$ (and therefore fixed volume $\vol(\Ga\bs G)$), for every $\eps>0$ there exist groups $\Ga$ of genus $g$ and  $N(1+\eps)$ arbitrarily large.
Therefore the only option seems to lie in 
an analysis of the Teichmüller space along the lines of \cite{Monk} and the references therein
(Although possible critical cases have been excluded in that paper).

\subsection*{General rank one groups}
The present proof uses eigenvalue estimates for the Laplacian. For general rank one Lie groups like $\mathrm{SO}(n,1)$ the Selberg zeta function  is described by the spectrum of generalized Laplacians \cite{BGV} on certain homogeneous vector bundles.
An extension of the present results would therefore require an extension of the eigenvalue estimates to these bundles.
One possible path might be the use of the ``group Laplacian'' instead, which would provide a much weaker estimate, as the dimension increases, however, it might be sufficient for the task at hand.

\subsection*{Higher rank}
For higher rank groups the Selberg zeta function needs replacing by corresponding higher rank zeta functions as in \cite{HR}.
As this zeta function only collects closed geodesics which lie in an open Weyl chamber, it might be necessary to consider several zeta functions, one for each conjugacy class of non-compact Cartan subgroups.
On the other hand, a simplification may arise by only considering the restriction of these several variable zeta functions to generic lines. 

\subsection*{p-adic groups}
For $p$-adic groups the symmetric space is replaced with the Bruhat-Tits building.
The rank one case (i.e. the case of graphs) has in great generality been dealt with affirmatively in \cite{Lenz}.
The higher rank case will rely on the several variable zeta functions defined in \cite{padicHR} and otherwise face the same difficulties as in the Lie-group situation except for the fact, that small radii of injectivity play nor role here.
 
\subsection*{Locally compact groups}
This last and most general case is highly speculative.
Is it possible to give a zeta function for any uniform lattice $\Ga$ in an arbitrary locally compact group $G$ which reflects the global geometry well enough to detect Benjamini-Schramm convergence as formulated in \cite{BSSpectral}?
There is a possible top-down and a bottom-up approach to this problem.
The top-down approach uses the data given in the trace formula to define a new type of zeta function such that the spectral side o the trace formula yields analytic continuation.
The bottom-up approach uses the known cases and the structure theory of locally compact groups given for instance in \cite{Tao}.

\subsection*{Non-cocompact lattices}
For arithmetic congruence groups, the adelic trace formula can be used to show that certain sequences of arithmetic groups are BS, see \cites{Raimbault,Matz}.
An open problem raised in these papers, is the question if any sequence $(\Ga_n)$ of congruence subgroups in a given linear algebraic group $G$ is already BS if the covolumes tend to infinity. 
A similar statement is known to be wrong without the congruence property.

The connection to Selberg-type zeta functions is more subtle in the noncompact situation, as the trace formula does not provide a direct link between geometric
spectral data.

\begin{bibdiv} \begin{biblist}

\bib{7Samurais}{article}{
   author={Abert, Miklos},
   author={Bergeron, Nicolas},
   author={Biringer, Ian},
   author={Gelander, Tsachik},
   author={Nikolov, Nikolay},
   author={Raimbault, Jean},
   author={Samet, Iddo},
   title={On the growth of $L^2$-invariants for sequences of lattices in Lie
   groups},
   journal={Ann. of Math. (2)},
   volume={185},
   date={2017},
   number={3},
   pages={711--790},
   issn={0003-486X},
   doi={10.4007/annals.2017.185.3.1},
}

\bib{BGV}{book}{
   author={Berline, Nicole},
   author={Getzler, Ezra},
   author={Vergne, Mich\`ele},
   title={Heat kernels and Dirac operators},
   series={Grundlehren Text Editions},
   note={Corrected reprint of the 1992 original},
   publisher={Springer-Verlag, Berlin},
   date={2004},
   pages={x+363},
   isbn={3-540-20062-2},
}

\bib{Buser}{book}{
   author={Buser, Peter},
   title={Geometry and spectra of compact Riemann surfaces},
   series={Modern Birkh\"{a}user Classics},
   note={Reprint of the 1992 edition},
   publisher={Birkh\"{a}user Boston, Ltd., Boston, MA},
   date={2010},
   pages={xvi+454},
   isbn={978-0-8176-4991-3},
   doi={10.1007/978-0-8176-4992-0},
}

\bib{HR}{article}{
   author={Deitmar, Anton},
   author={Pavey, Mark},
   title={A prime geodesic theorem for ${\rm SL}_4$},
   journal={Ann. Global Anal. Geom.},
   volume={33},
   date={2008},
   number={2},
   pages={161--205},
   issn={0232-704X},
   doi={10.1007/s10455-007-9078-4},
}

\bib{HA2}{book}{
   author={Deitmar, Anton},
   author={Echterhoff, Siegfried},
   title={Principles of harmonic analysis},
   series={Universitext},
   edition={2},
   publisher={Springer, Cham},
   date={2014},
   pages={xiv+332},
   isbn={978-3-319-05791-0},
   isbn={978-3-319-05792-7},
   doi={10.1007/978-3-319-05792-7},
}

\bib{padicHR}{article}{
   author={Deitmar, Anton},
   author={McCallum, Rupert},
   title={A prime geodesic theorem for higher rank buildings},
   journal={Kodai Math. J.},
   volume={41},
   date={2018},
   number={2},
   pages={440--455},
   issn={0386-5991},
   doi={10.2996/kmj/1530496852},
}

\bib{BSSpectral}{article}{
   author={Deitmar, Anton},
   title={Benjamini-Schramm and spectral convergence},
   journal={Enseign. Math.},
   volume={64},
   date={2018},
   number={3-4},
   pages={371--394},
   issn={0013-8584},
   doi={10.4171/LEM/64-3/4-8},
}

\bib{Hashi}{article}{
   author={Hashimoto, Ki-ichiro},
   author={Hori, Akira},
   title={Selberg-Ihara's zeta function for $p$-adic discrete groups},
   conference={
      title={Automorphic forms and geometry of arithmetic varieties},
   },
   book={
      series={Adv. Stud. Pure Math.},
      volume={15},
      publisher={Academic Press, Boston, MA},
   },
   date={1989},
   pages={171--210},
   doi={10.2969/aspm/01510171},
}

\bib{HKP}{article}{
   author={Hassannezhad, Asma},
   author={Kokarev, Gerasim},
   author={Polterovich, Iosif},
   title={Eigenvalue inequalities on Riemannian manifolds with a lower Ricci
   curvature bound},
   journal={J. Spectr. Theory},
   volume={6},
   date={2016},
   number={4},
   pages={807--835},
   issn={1664-039X},
   doi={10.4171/JST/143},
}

\bib{Ihara2by2}{article}{
   author={Ihara, Yasutaka},
   title={On discrete subgroups of the two by two projective linear group
   over ${\germ p}$-adic fields},
   journal={J. Math. Soc. Japan},
   volume={18},
   date={1966},
   pages={219--235},
   issn={0025-5645},
   doi={10.2969/jmsj/01830219},
}

\bib{Kionke}{article}{
   author={Kionke, Steffen},
   author={Schrödl-Baumann, Michael},
   title={Equivariant Benjamini-Schramm Convergence of Simplicial Complexes and ?2
-Multiplicities},
   eprint={https://arxiv.org/abs/1905.05658},
   date={2019},
}

\bib{Knapp}{book}{
   author={Knapp, Anthony W.},
   title={Representation theory of semisimple groups},
   series={Princeton Mathematical Series},
   volume={36},
   note={An overview based on examples},
   publisher={Princeton University Press, Princeton, NJ},
   date={1986},
   pages={xviii+774},
   isbn={0-691-08401-7},
   doi={10.1515/9781400883974},
}

\bib{Lenz}{article}{
   author={Lenz, Daniel},
   author={Pogorzelski, Felix},
   author={Schmidt, Marcel},
   title={The Ihara zeta function for infinite graphs},
   journal={Trans. Amer. Math. Soc.},
   volume={371},
   date={2019},
   number={8},
   pages={5687--5729},
   issn={0002-9947},
   doi={10.1090/tran/7508},
}

\bib{Matz}{article}{
   author={Matz, Jasmin},
   title={Limit multiplicities for ${\rm SL}_2(\mathcal O_F)$ in ${\rm
   SL}_2(\Bbb R^{r_1}\oplus\Bbb C^{r_2})$},
   journal={Groups Geom. Dyn.},
   volume={13},
   date={2019},
   number={3},
   pages={841--881},
   issn={1661-7207},
   doi={10.4171/GGD/507},
}

\bib{Monk}{article}{
   author={Monk, Laura},
   title={Benjamini-Schramm convergence and spectrum of random hyperbolic surfaces of high genus},
   language={English},
   dte={2020},
   eprint={hal-02462994},
}

\bib{Raimbault}{article}{
   author={Raimbault, Jean},
   title={On the convergence of arithmetic orbifolds},
   language={English, with English and French summaries},
   journal={Ann. Inst. Fourier (Grenoble)},
   volume={67},
   date={2017},
   number={6},
   pages={2547--2596},
   issn={0373-0956},
}

\bib{Sauvageot}{article}{
   author={Sauvageot, Fran\c{c}ois},
   title={Principe de densit\'e pour les groupes r\'eductifs},
   language={French, with English and French summaries},
   journal={Compositio Math.},
   volume={108},
   date={1997},
   number={2},
   pages={151--184},
   issn={0010-437X},
   doi={10.1023/A:1000216412619},
}

\bib{Selberg}{article}{
   author={Selberg, A.},
   title={Harmonic analysis and discontinuous groups in weakly symmetric
   Riemannian spaces with applications to Dirichlet series},
   journal={J. Indian Math. Soc. (N.S.)},
   volume={20},
   date={1956},
   pages={47--87},
   issn={0019-5839},
}

\bib{Tao}{book}{
   author={Tao, Terence},
   title={Hilbert's fifth problem and related topics},
   series={Graduate Studies in Mathematics},
   volume={153},
   publisher={American Mathematical Society, Providence, RI},
   date={2014},
   pages={xiv+338},
   isbn={978-1-4704-1564-8},
}

\end{biblist} \end{bibdiv}

{\small Mathematisches Institut\\
Auf der Morgenstelle 10\\
72076 T\"ubingen\\
Germany\\
\tt deitmar@uni-tuebingen.de}

\today

\end{document}